\newtheorem{theorem}{Theorem}[section]
\newtheorem{proposition}{Proposition}[section]
\numberwithin{equation}{section}
\title[Quantitative uniqueness of continuation result]{Quantitative uniqueness of continuation result related to Hopf's lemma}
\author[Mourad Choulli]{Mourad Choulli}
\address{Universit\'e de Lorraine, 34 cours L\'eopold, 54052 Nancy cedex, France}
\email{mourad.choulli@univ-lorraine.fr}
\author[Faouzi Triki]{Faouzi Triki}
\address{Laboratoire Jean Kuntzmann,  UMR CNRS 5224, 
Universit\'e  Grenoble-Alpes, 700 Avenue Centrale,
38401 Saint-Martin-d'H\`eres, France}
\email{faouzi.triki@univ-grenoble-alpes.fr}
\author[Qi Xue]{Qi Xue}
\address{Laboratoire Jean Kuntzmann,  UMR CNRS 5224, 
Universit\'e  Grenoble-Alpes, 700 Avenue Centrale,
38401 Saint-Martin-d'H\`eres, France}
\email{Qi.Xue@univ-grenoble-alpes.fr}
\thanks{The authors are supported by the grant ANR-17-CE40-0029 of the French National Research Agency ANR (project MultiOnde). }
\date{}
\begin{document}

\begin{abstract}
The classical Hopf's lemma can be reformulated as uniqueness of continuation result. We aim in the present work to quantify this property. We show precisely that if a solution $u$ of a divergence form elliptic equation attains its maximum at a boundary point $x_0$ then both $L^1$-norms of $u-u(x_0)$ on the domain and on the boundary are bounded, up to a multiplicative constant,  by the exterior normal derivative at $x_0$.
\end{abstract}

\subjclass[2010]{35B50, 35C15, 35J08, 35J15.}

\keywords{Elliptic equation in divergence form, Hopf's lemma, maximum principle, uniqueness of continuation, Green's function, Poisson type kernel}

\maketitle

\section{Introduction}\label{section1}

Let $\Omega$ be a $C^{1,1}$ bounded domain of $\mathbb{R}^n$ ($n\ge 2$) with boundary $\Gamma$. All functions we consider are assumed to be real valued.

Fix $\kappa >1$, $0<\beta <1$ and  let $\Sigma$ be the set of functions $\sigma =(\sigma^{ij})\in C^{1,\beta}(\overline{\Omega},\mathbb{R}^{n\times n})$ satisfying $\sigma^{ji}=\sigma^{ij}$, $1\le i,j\le n$,
\[
 \kappa^{-1}|\xi|^2\le \sigma \xi\cdot \xi  \quad \mbox{for each}\; \xi  \in \mathbb{R}^n\quad \mbox{and}\quad  \|\sigma \|_{C^{1,\beta}(\overline{\Omega},\mathbb{R}^{n\times n})}\le \kappa .
\]

We associate to any $\sigma \in \Sigma$  the operator $L_\sigma$ acting as follows
\[
L_\sigma u=-\mbox{div}(\sigma \nabla u),\quad u\in C^2(\Omega).
\]

Define
\[
\mathscr{S}=\{u\in C^2(\Omega)\cap C^1(\overline{\Omega}); \;  L_\sigma u=0\; \mbox{for some}\; \sigma \in \Sigma\}
\]
and set
\[
M(u)=\{ x\in \overline{\Omega};\; u(x)=\max_{\overline{\Omega}}u\},\quad u\in \mathscr{S}.
\]

When $x\in \Gamma$ we denote by $\nu(x)$ the unit normal vector to $\Gamma$ pointing outward $\Omega$.

Let $u\in \mathscr{S}$ so that $\nabla u\ne0$. According to the strong maximum principle $M(u)\subset \Gamma$ (e.g. \cite[Theorem 3.5, page 35]{GT}) and by Hopf's Lemma\footnote{Also called Hopf-Oleinik-Zaremba's lemma or boundary point lemma.}, $\partial_\nu u(x)=\nabla u(x)\cdot \nu (x)>0$ for any $x\in M(u)$ (e.g \cite[Lemma 3.4, page 34]{GT}).

The first result of this kind goes back to the pioneering paper by Zaremba \cite{Za} and generalized later independently  by Hopf \cite{Ho} and Oleinik \cite{Ol}. We refer to the nice historical survey in the introduction of \cite{AN} including results with less regularity on the domain and the coefficients of the operators under consideration.

Hopf's lemma can be rephrased as uniqueness of continuation result: let $u\in \mathscr{S}$ and $x_0\in M(u)$. If $\partial_\nu u(x_0)=0$ then $u$ is identically equal to $u(x_0)$.

The following theorem quantify this uniqueness of continuation property.

\begin{theorem}\label{theorem1}
For any $u\in \mathscr{S}$ and $x_0\in M(u)$ we have
\begin{align}
&\|u(x_0)-u\|_{L^1(\Omega )}\le C\|u(x_0)-u\|_{L^1(\Gamma )},\label{me.0}
\\
&\|u(x_0)-u\|_{L^1(\Omega )}+\|u(x_0)-u\|_{L^1(\Gamma )}\le C\partial_\nu u(x_0),\label{me}
\end{align}
where $C=C(n,\Omega ,\kappa ,\beta)>0$ is a generic constant.
\end{theorem}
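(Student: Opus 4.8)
The plan is to set $v=u(x_0)-u$ and reduce everything to a single nonnegative solution. By the strong maximum principle $x_0\in\Gamma$, and since $L_\sigma$ annihilates constants, $v\in C^2(\Omega)\cap C^1(\overline{\Omega})$ satisfies $L_\sigma v=0$ in $\Omega$, $v\ge 0$ in $\overline{\Omega}$, and $v(x_0)=0$. Because $x_0$ minimizes $v$ both in $\overline{\Omega}$ and along $\Gamma$, the tangential gradient of $v$ vanishes at $x_0$, so $\partial_\nu u(x_0)=-\partial_\nu v(x_0)=|\partial_\nu v(x_0)|\ge 0$, and the $L^1$--norms of $u(x_0)-u$ and of $v$ agree on both $\Omega$ and $\Gamma$. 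Thus it suffices to prove $\|v\|_{L^1(\Omega)}\le C\|v\|_{L^1(\Gamma)}$ and $\|v\|_{L^1(\Gamma)}\le C|\partial_\nu v(x_0)|$. The central tool will be the Green's function $G_\sigma(x,y)$ of $L_\sigma$ with homogeneous Dirichlet condition and the associated Poisson kernel $P_\sigma(x,y)=-\sigma(y)\nabla_y G_\sigma(x,y)\cdot\nu(y)>0$, together with the representation $v(x)=\int_\Gamma P_\sigma(x,y)\,v(y)\,dS(y)$ valid for $x\in\Omega$.

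For \eqref{me.0} I would argue by duality against the torsion function $\phi_\sigma$, the solution of $L_\sigma\phi_\sigma=1$ in $\Omega$ with $\phi_\sigma=0$ on $\Gamma$. Since the right-hand side is positive, $\phi_\sigma>0$ in $\Omega$, so on $\Gamma$ one has $\nabla\phi_\sigma=(\partial_\nu\phi_\sigma)\nu$ with $\partial_\nu\phi_\sigma<0$. Green's formula applied to the pair $(v,\phi_\sigma)$, using $L_\sigma v=0$, $L_\sigma\phi_\sigma=1$ and $\phi_\sigma|_\Gamma=0$, gives the identity $\int_\Omega v\,dx=\int_\Gamma v\,|\sigma\nabla\phi_\sigma\cdot\nu|\,dS$. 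By uniform Schauder estimates the weight $|\sigma\nabla\phi_\sigma\cdot\nu|=|\partial_\nu\phi_\sigma|\,(\sigma\nu\cdot\nu)$ is bounded above, and by Hopf's lemma applied to $\phi_\sigma$ (with a lower bound uniform over $\sigma\in\Sigma$) it is bounded below by a positive constant; both bounds depend only on $n,\Omega,\kappa,\beta$. Hence $\|v\|_{L^1(\Omega)}$ and $\|v\|_{L^1(\Gamma)}$ are in fact equivalent, which yields \eqref{me.0} and reduces \eqref{me} to the single estimate $\|v\|_{L^1(\Gamma)}\le C|\partial_\nu v(x_0)|$.

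To obtain this last estimate I would differentiate the Poisson representation at $x_0$ along the inward normal. For fixed $y\in\Gamma$, $y\ne x_0$, the function $x\mapsto P_\sigma(x,y)$ is a positive solution of $L_\sigma(\cdot)=0$ that vanishes on $\Gamma$ near $x_0$, hence is smooth up to the boundary there, and $P_\sigma(x_0-t\nu,y)/t\to K_\sigma(x_0,y):=-\partial_{\nu_x}P_\sigma(x_0,y)\ge 0$ as $t\to 0^+$. Since $v\in C^1(\overline{\Omega})$ with $v(x_0)=0$, the left-hand quotients $v(x_0-t\nu)/t$ converge to $|\partial_\nu v(x_0)|$; applying Fatou's lemma to the nonnegative integrands in $v(x_0-t\nu)/t=\int_\Gamma t^{-1}P_\sigma(x_0-t\nu,y)\,v(y)\,dS(y)$ gives $|\partial_\nu v(x_0)|\ge\int_\Gamma K_\sigma(x_0,y)\,v(y)\,dS(y)$. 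Consequently, once one knows the uniform lower bound $K_\sigma(x_0,y)\ge c>0$ for all $y\in\Gamma$, it follows that $|\partial_\nu v(x_0)|\ge c\int_\Gamma v\,dS=c\|v\|_{L^1(\Gamma)}$, completing the proof.

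The main obstacle is precisely this lower bound on $K_\sigma$, the mixed double normal derivative of the Green's function, uniform over $\sigma\in\Sigma$ and over the pole $x_0\in\Gamma$. I expect to derive it from sharp two-sided estimates for $G_\sigma$ of Gr\"uter--Widman type, namely $G_\sigma(x,y)\asymp\min\!\big(|x-y|^{2-n},\,d(x)d(y)|x-y|^{-n}\big)$ for $n\ge 3$, where $d(x)=\mathrm{dist}(x,\Gamma)$ (with the usual logarithmic modification when $n=2$), which upon taking one normal derivative in each variable yield $K_\sigma(x_0,y)\asymp|x_0-y|^{-n}$ and in particular $K_\sigma(x_0,y)\ge c\,(\mathrm{diam}\,\Omega)^{-n}$. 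The delicate point is the uniformity of these bounds across the coefficient class and the location of the pole; this I would secure by freezing coefficients near $x_0$ to compare with an explicit model Poisson kernel, which gives the lower bound for $y$ close to $x_0$, and by a boundary Harnack principle together with a Harnack chain and a compactness argument over $\Sigma$ to propagate a positive lower bound to boundary points $y$ far from $x_0$.
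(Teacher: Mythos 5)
Your architecture is sound and genuinely different from the paper's in both halves, so let me first record what it buys. For \eqref{me.0}, the paper integrates the Poisson representation over $\Omega$ and uses Fubini together with the upper bound in \eqref{ke}; your duality argument against the torsion function $\phi_\sigma$ is more elementary: the identity $\int_\Omega v\,dx=\int_\Gamma v\,|\sigma\nabla\phi_\sigma\cdot\nu|\,dS$ is correct, and for \eqref{me.0} you only need the upper bound on the weight, which follows from the uniform $C^{1,\alpha}$ estimate of Theorem \ref{theorem1.1} -- valid for Lipschitz coefficients, with no kernel estimates at all (the Hopf lower bound giving full equivalence of the two norms is extra and unnecessary). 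For \eqref{me}, where the paper re-runs Hopf's barrier construction to bound $\partial_\nu u(x_0)$ below by interior values of $u(x_0)-u$ on a small sphere and only then invokes the lower bound in \eqref{ke} at those interior points, you instead differentiate the Poisson representation at $x_0$ and apply Fatou. That step is legitimate and, granted \eqref{ke}, even shorter than the paper's: since $\mathrm{dist}(x_0-t\nu(x_0),\Gamma)=t$ for small $t$, the lower bound in \eqref{ke} gives $\liminf_{t\to0^+}t^{-1}K_\sigma(x_0-t\nu(x_0),y)\ge\varkappa^{-1}|x_0-y|^{-n}\ge\varkappa^{-1}(\mathrm{diam}\,\Omega)^{-n}$, and Fatou then yields $\partial_\nu u(x_0)\ge c\,\|u(x_0)-u\|_{L^1(\Gamma)}$ with no barrier whatsoever.

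The genuine gap is the uniform kernel lower bound itself, which is the crux of the theorem and which your proposal defers rather than proves. You attribute the two-sided estimate $G_\sigma\asymp\min\bigl(|x-y|^{2-n},\,d(x)d(y)|x-y|^{-n}\bigr)$ to ``Gr\"uter--Widman type'' results, but Gr\"uter--Widman prove only the upper bound (plus an interior lower bound for $|x-y|\lesssim d(x)$); the global lower bound with boundary decay, uniform over the coefficient class, is a much deeper fact. The paper obtains it from Hueber--Sieveking's comparison theorem \eqref{ke1} ($G_\sigma\asymp G$, with $G$ the Laplacian's Green function, on $C^{1,1}$ domains with $C^{1,\beta}$ coefficients), combined with Zhao's two-sided bound \eqref{Pk} for the Laplacian Poisson kernel, and then a difference-quotient argument (Proposition \ref{proposition1}) to pass from $G_\sigma$ to $K_\sigma$ -- a passage which is legitimate only because $G_\sigma(x,\cdot)$ and $G(x,\cdot)$ both vanish on $\Gamma$, so the two-sided inequality survives one-sided normal difference quotients; your remark that the estimates ``upon taking one normal derivative in each variable yield'' the kernel bound needs exactly this care, since pointwise inequalities cannot be differentiated in general (your Fatou step, phrased with $\liminf$ of difference quotients, does handle this correctly). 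Your fallback plan -- freezing coefficients near $x_0$, boundary Harnack, Harnack chains, and a compactness argument over $\Sigma$ -- is not a routine fill-in: it amounts to re-proving Hueber--Sieveking's theorem, and the compactness step is itself delicate (in which topology is $\Sigma$ compact, and why would the kernel bounds pass to limits of coefficients?). Note that this is precisely where the H\"older exponent $\beta$ enters the constant $C(n,\Omega,\kappa,\beta)$, and where, as the paper points out after Theorem \ref{theorem1}, it is not even known whether $C^{0,1}$ coefficients suffice. So: your reductions are correct and in places tidier than the paper's, but the key estimate on which everything rests is left unproven.
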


 We used $C^{1,\beta}$-regularity of the coefficients of the operator $L_\sigma$ only in the two-sided inequality \eqref{ke1} (which is contained in \cite[main Theorem in page 105]{HS}). For all other results $C^{0,1}$-regularity of the coefficients of the operator $L_\sigma$ is sufficient. It is not known presently  whether the result of \cite[main Theorem in page 105]{HS} can be extended to coefficients with $C^{0,1}$-regularity.

To prove Theorem \ref{theorem1} we modify the proof of Hopf's lemma itself, we make use the integral representation 
\[
u(x)-u(x_0)=\int_\Gamma K_\sigma (x,y)(u(x)-u(x_0))dS(y),\quad x\in \Omega,
\]
where $K_\sigma$ is Poisson type kernel associated to the operator $L_\sigma$. The proof is completed by showing beforehand two-sided inequality for $K_\sigma$ involving the weakly singular kernel $\mathrm{dist}(x,\Gamma)|x-y|^{-n}$.

Theorem \ref{theorem1} confirms numerical testing we obtained before. The details of these numerical testing are given in Appendix \ref{appendixB}. 

The rest of this text is organized as follows. In Section \ref{section2} we collect some properties of the Green function associated to the operator $L_\sigma$ and establish two-sided inequality for the Poisson type kernel $K_\sigma$. We give in Section \ref{section3} the proof of Theorem \ref{theorem1}. We also added two appendices. Appendix \ref{appendixA} contains the proof of a regularity result we used in Section \ref{section2}. While Appendix \ref{appendixB} is devoted to numerical testing.

\section{Preliminaries}\label{section2}

Define, where $\kappa >1$, $\Sigma_0$ as the set of functions $\sigma =(\sigma^{ij})\in C^{0,1}(\overline{\Omega},\mathbb{R}^{n\times n})$ satisfying $\sigma^{ji}=\sigma^{ij}$, $1\le i,j\le n$,
\[
 \kappa^{-1}|\xi|^2\le \sigma \xi\cdot \xi  \quad \mbox{for each}\; \xi  \in \mathbb{R}^n\quad \mbox{and}\quad  \|\sigma \|_{C^{0,1}(\overline{\Omega},\mathbb{R}^{n\times n})}\le \kappa .
\]
It is worth noticing that according to Rademacher's theorem (e.g. \cite[Theorem 2 in page 81]{EG}) $C^{0,1}(\overline{\Omega},\mathbb{R}^{n\times n})$ is continuously embedded in $W^{1,\infty}(\overline{\Omega},\mathbb{R}^{n\times n})$.

We associate to $\sigma \in \Sigma_0$ the symmetric bounded and coercive bilinear form
\[
\mathfrak{a}_\sigma (u,v)=(\sigma\nabla u|\nabla v)_2,\quad u,v\in H_0^1(\Omega),
\]
where $(\cdot |\cdot)_2$ is the usual scalar product  on $L^2(\Omega )$.

In this section we prove the following result, where
\[
\mathscr{U}=\{(x,y)\in \Omega \times \Omega;\; x\ne y\}.
\]

\begin{theorem}\label{theorem2}
Let $\sigma \in \Sigma_0$. Then
\\
(1) There exists a unique $G_\sigma\in L^1(\Omega\times \Omega )\cap C^1(\mathscr{U})$ satisfying
\[
\mathfrak{a}_\sigma (G_\sigma(\cdot ,y),v)=v(y),\quad v\in C_0^\infty(\Omega),\; y\in \Omega ,
\]
and $G_\sigma (x,y)=G_\sigma (y,x)$, $(x,y)\in \Omega \times \Omega$.
\\
(2) Let $\omega\Subset \omega_0\Subset \Omega$. Then $G_\sigma(x,\cdot )\in C^{1,\alpha}(\overline{\Omega}\setminus\omega_0)$ for each $x\in \Omega$, where $0\le \alpha =\alpha(n)<1$ is a constant. We have in addition 
\begin{equation}\label{G2.1}
\|G_\sigma(x,\cdot ) \|_{C^{1,\alpha}(\overline{\Omega}\setminus \omega_0)}\le c\quad \mbox{for each}\; x\in \omega
\end{equation}
and 
\begin{equation}\label{G2.2}
\|G_\sigma(x_1,\cdot )-G_\sigma(x_2,\cdot) \|_{C^{1,\alpha}(\overline{\Omega}\setminus \omega_0)}\le c|x_1-x_2|\quad \mbox{for each}\; x_1,x_2\in \omega ,
\end{equation}
where $c=c(n,\Omega ,\kappa,\omega,\omega_0)>0$ is a generic constant.
\end{theorem}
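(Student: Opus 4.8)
The plan is to follow the classical construction of the Green function for divergence-form operators with Lipschitz coefficients, and then to transfer interior estimates into estimates up to the boundary that are uniform away from the diagonal.

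For part (1), I would fix $y\in\Omega$ and regularize the Dirac mass: for small $\rho>0$ set $\mu_\rho^y=|B_\rho(y)|^{-1}\chi_{B_\rho(y)}$ and invoke the Lax--Milgram theorem for the coercive symmetric form $\mathfrak{a}_\sigma$ to obtain $G_\rho(\cdot,y)\in H_0^1(\Omega)$ with $\mathfrak{a}_\sigma(G_\rho(\cdot,y),v)=\int_\Omega\mu_\rho^y v\,dx$ for all $v\in H_0^1(\Omega)$. The heart of this step is a set of bounds uniform in $\rho$: testing against suitable truncations of $G_\rho$ and applying the Sobolev inequality yields uniform control of $G_\rho(\cdot,y)$ in the weak Lebesgue space $L^{n/(n-2)}_w(\Omega)$ and of $\nabla G_\rho(\cdot,y)$ in $L^{n/(n-1)}_w(\Omega)$ (for $n=2$ one uses the logarithmic substitute). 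After extraction this produces a limit $G_\sigma(\cdot,y)\in W_0^{1,q}(\Omega)$ for every $q<n/(n-1)$ solving the stated weak identity, and integrating the pointwise bound $0\le G_\sigma(x,y)\le C|x-y|^{2-n}$ in both variables gives $G_\sigma\in L^1(\Omega\times\Omega)$. Uniqueness follows from coercivity, since the difference of two solutions is $L_\sigma$-harmonic with vanishing data. Symmetry is obtained by passing to the limit in $\mathfrak{a}_\sigma(G_\rho(\cdot,y),G_{\rho'}(\cdot,x))$, which equals both $\int_\Omega\mu_{\rho'}^x G_\rho(\cdot,y)\,dx'$ and $\int_\Omega\mu_\rho^y G_{\rho'}(\cdot,x)\,dx'$ thanks to the symmetry $\sigma^{ij}=\sigma^{ji}$ of $\mathfrak{a}_\sigma$. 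Finally, away from the diagonal $G_\sigma(\cdot,y)$ solves $L_\sigma G_\sigma(\cdot,y)=0$, so De Giorgi--Nash--Moser theory together with the Schauder-type estimate for Lipschitz coefficients (Appendix \ref{appendixA}) gives $G_\sigma(\cdot,y)\in C^1$ locally and, combined with the a priori bounds and symmetry, membership $G_\sigma\in C^1(\mathscr U)$.

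For part (2), the key observation is that for fixed $x\in\omega$ the symmetry $G_\sigma(x,y)=G_\sigma(y,x)$ exhibits $y\mapsto G_\sigma(x,y)$ as a solution of $L_\sigma G_\sigma(x,\cdot)=0$ on $\Omega\setminus\overline{\omega_0}$ vanishing on $\Gamma$, since $x\notin\overline{\Omega}\setminus\omega_0$. Fixing an intermediate open set $\omega\Subset\omega'\Subset\omega_0$ and applying the interior and boundary $C^{1,\alpha}$ estimates (the regularity result of Appendix \ref{appendixA}, using the $C^{1,1}$ regularity of $\Gamma$) bounds $\|G_\sigma(x,\cdot)\|_{C^{1,\alpha}(\overline{\Omega}\setminus\omega_0)}$ by $\|G_\sigma(x,\cdot)\|_{L^\infty(\overline{\Omega}\setminus\omega')}$. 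Since $|x-y|\ge\mathrm{dist}(\omega,\partial\omega')>0$ on the relevant set, the pointwise bound $G_\sigma(x,y)\le C|x-y|^{2-n}$ makes this $L^\infty$ norm uniformly bounded for $x\in\omega$, which is exactly \eqref{G2.1}.

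For the Lipschitz dependence \eqref{G2.2} I would set $w=G_\sigma(x_1,\cdot)-G_\sigma(x_2,\cdot)$, which again solves $L_\sigma w=0$ on $\Omega\setminus\overline{\omega_0}$ with zero boundary values, so the same regularity estimate controls $\|w\|_{C^{1,\alpha}(\overline{\Omega}\setminus\omega_0)}$ by $\|w\|_{L^\infty(\overline{\Omega}\setminus\omega')}$, and it suffices to prove $\|w\|_{L^\infty(\overline{\Omega}\setminus\omega')}\le c|x_1-x_2|$. This I would obtain from an interior gradient estimate: using symmetry in the other variable, for fixed $y\in\overline{\Omega}\setminus\omega'$ the map $x\mapsto G_\sigma(x,y)$ is $L_\sigma$-harmonic near $\omega$, so the interior gradient bound gives $\|\nabla_x G_\sigma(\cdot,y)\|_{L^\infty(\omega)}\le C\|G_\sigma(\cdot,y)\|_{L^\infty(\omega')}$, which is uniformly bounded in $y$ by the pointwise estimate, whence $|G_\sigma(x_1,y)-G_\sigma(x_2,y)|\le c|x_1-x_2|$ for all such $y$. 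The main obstacle is not any single step but the bookkeeping needed to keep all constants uniform in $x$ (and independent of $\rho$ in part (1)): the pointwise and gradient bounds on $G_\sigma$ away from the diagonal must be established with constants depending only on $n,\Omega,\kappa$ and the fixed sets, and the passage from interior to global $C^{1,\alpha}$ control rests entirely on the regularity result deferred to Appendix \ref{appendixA}.
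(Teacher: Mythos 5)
Your architecture matches the paper's in outline --- localize near the boundary, then invoke the global regularity theorem of Appendix \ref{appendixA} --- but the inputs are genuinely different. For part (1) the paper simply cites \cite[Propositions 24 and 26]{DL}, whereas you re-run the classical construction (mollified Dirac data, Lax--Milgram, uniform weak-$L^p$ bounds, extraction, symmetry by testing the two mollified solutions against each other); that is a valid substitute. For part (2) the paper takes a cutoff $\phi$ with $\phi\equiv 1$ near $\overline{\Omega}\setminus\omega_0$ and $\phi\equiv 0$ near $\omega$, notes that $\phi G_\sigma(x,\cdot)$ solves \eqref{bvp1} with a right-hand side supported in the interior transition region and involving both $G_\sigma(x,\cdot)$ and $\nabla_y G_\sigma(x,\cdot)$, bounds that right-hand side in $L^\infty$ by the Gr\"uter--Widman estimates \cite{GW} (and, for \eqref{G2.2}, by $c|x_1-x_2|$ using their mixed second-derivative bound together with the mean value theorem), and then applies Theorem \ref{theorem1.1}. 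You instead derive \eqref{G2.2} from symmetry plus an interior gradient estimate in the $x$-variable; this avoids the deeper inequality (vi) of \cite{GW} and is more self-contained, at the cost of having to justify the localized estimates yourself.

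Two steps need repair. First, Theorem \ref{theorem1.1} is a statement about $H_0^1(\Omega)$ solutions of \eqref{bvp1} on all of $\Omega$; it does not directly yield the local boundary estimate $\|G_\sigma(x,\cdot)\|_{C^{1,\alpha}(\overline{\Omega}\setminus\omega_0)}\le C\|G_\sigma(x,\cdot)\|_{L^\infty(\overline{\Omega}\setminus\omega')}$ that you invoke, since $G_\sigma(x,\cdot)$ has no reason to vanish on $\partial\omega'$. To deduce it from Appendix \ref{appendixA} you must multiply by a cutoff (exactly the paper's step), and then the right-hand side of the resulting Dirichlet problem contains $\nabla_y G_\sigma(x,\cdot)$ on the transition region, so the $L^\infty$ norm of $G_\sigma(x,\cdot)$ alone does not control it: you need an interior gradient bound there first (which is in your toolkit, since the transition region is compactly contained in $\Omega$ away from the pole --- or cite \cite{GW} as the paper does, or invoke local boundary $W^{2,p}$ estimates instead of Appendix \ref{appendixA}). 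Second, your uniformity claim in the proof of \eqref{G2.2} fails as written: for $y\in\partial\omega'$ the quantity $\|G_\sigma(\cdot,y)\|_{L^\infty(\omega')}$ is infinite, because the Green function blows up at its pole (logarithmically when $n=2$; note also that your pointwise bound $C|x-y|^{2-n}$ must be replaced by a logarithmic bound in dimension two). This is fixed by interposing $\omega\Subset\omega''\Subset\omega'$ and bounding $\|\nabla_x G_\sigma(\cdot,y)\|_{L^\infty(\omega)}$ by $\|G_\sigma(\cdot,y)\|_{L^\infty(\omega'')}$, which is uniform over $y\in\overline{\Omega}\setminus\omega'$. With these adjustments your argument closes.
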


Prior to proving this theorem we state a regularity result of the solution of the Dirichlet BVP associated to $L_\sigma$.

Denote by $\gamma_0$ the bounded trace operator from $H^1(\Omega )$ onto $H^{1/2}(\Gamma )$ defined by 
\[
\gamma_0u =u_{|\Gamma},\quad u\in C^\infty (\overline{\Omega })
\]

Let $\sigma \in \Sigma_0$, $f\in L^\infty (\Omega )$ and consider the BVP
\begin{equation}\label{bvp1}
\left\{
\begin{array}{l}
-\mbox{div}(\sigma \nabla u)=f\quad \mbox{in}\; \Omega,
\\
\gamma_0u=0.
\end{array}
\right.
\end{equation}

Then Lax-Milgram's lemma allows us to conclude that the BVP \eqref{bvp1} has a unique variational solution $u=u_\sigma (f)\in H_0^1(\Omega)$:
\begin{equation}\label{1.0}
\mathfrak{a}_\sigma (u,v)=(f| v)_2\quad \mbox{for each}\; v\in H_0^1(\Omega ).
\end{equation}

\begin{theorem}\label{theorem1.1}
For any $\sigma \in \Sigma_0$ and $f\in L^\infty (\Omega )$, $u_\sigma (f)\in H^2(\Omega)\cap C^{1,\alpha}(\overline{\Omega})$, for some $0<\alpha =\alpha (n)<1$. Furthermore the following estimate holds
\begin{equation}\label{t1.1}
\|u_\sigma (f)\|_{H^2(\Omega )\cap C^{1,\alpha}(\overline{\Omega})}\le C\|f\|_{L^\infty (\Omega )},
\end{equation}
where $C=C(n,\Omega ,\kappa)>0$ is a constant.
\end{theorem}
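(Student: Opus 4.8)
The plan is to reduce the divergence-form problem to a non-divergence one and then invoke the Calderón--Zygmund $L^p$ theory. Since $\sigma\in\Sigma_0\subset W^{1,\infty}(\overline{\Omega},\mathbb{R}^{n\times n})$, expanding the divergence shows that, for a sufficiently regular $u$, $L_\sigma u=-\sigma^{ij}\partial_{ij}u-(\partial_j\sigma^{ij})\partial_i u$. I therefore introduce the non-divergence operator $\widetilde{L}_\sigma u=-\sigma^{ij}\partial_{ij}u-(\partial_j\sigma^{ij})\partial_i u$, whose leading coefficients $\sigma^{ij}$ are continuous on $\overline{\Omega}$ with modulus of continuity controlled by $\kappa$, which is uniformly elliptic, and whose first-order coefficients $-\partial_j\sigma^{ij}$ lie in $L^\infty(\Omega)$ with norm bounded by $C\kappa$. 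As $\Omega$ is of class $C^{1,1}$ and $f\in L^\infty(\Omega)\subset L^p(\Omega)$ for every $1<p<\infty$, the global $W^{2,p}$ solvability for non-divergence equations (e.g. \cite[Theorem 9.15]{GT}) provides, for each such $p$, a unique $w_p\in W^{2,p}(\Omega)\cap W_0^{1,p}(\Omega)$ with $\widetilde{L}_\sigma w_p=f$ a.e. and
\[
\|w_p\|_{W^{2,p}(\Omega)}\le C\|f\|_{L^p(\Omega)}\le C|\Omega|^{1/p}\|f\|_{L^\infty(\Omega)},
\]
the constant $C$ depending on $n,\Omega,\kappa,p$ only through the ellipticity bound, the uniform modulus of continuity of the $\sigma^{ij}$, and the bound on $\|\partial_j\sigma^{ij}\|_{L^\infty(\Omega)}$.

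Second I would identify $w_p$ with the variational solution $u_\sigma(f)$. Taking $p=2$, the function $w_2\in H^2(\Omega)\cap H_0^1(\Omega)$ satisfies $-\sigma^{ij}\partial_{ij}w_2-(\partial_j\sigma^{ij})\partial_i w_2=f$ a.e.; multiplying by $v\in C_0^\infty(\Omega)$ and integrating by parts---which is licit because $\sigma\in W^{1,\infty}$ and $w_2\in H^2$ force $\sigma^{ij}\partial_i w_2\in H^1(\Omega)$---turns this pointwise identity into $\mathfrak{a}_\sigma(w_2,v)=(f| v)_2$. Hence $w_2$ is a variational solution of \eqref{bvp1}, and uniqueness in $H_0^1(\Omega)$ yields $w_2=u_\sigma(f)$; in particular $u_\sigma(f)\in H^2(\Omega)$. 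The same computation applied to any $p>n$ (for which $w_p\in H^2(\Omega)$ since $p>2$) shows that $w_p$ is likewise a variational solution, whence $w_p=u_\sigma(f)$ and therefore $u_\sigma(f)\in W^{2,p}(\Omega)$ with the bound above.

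Finally I would upgrade the $W^{2,p}$ regularity to Hölder regularity of the gradient. Fixing $p>n$ and using the Sobolev embedding $W^{2,p}(\Omega)\hookrightarrow C^{1,\alpha}(\overline{\Omega})$ with $\alpha=1-n/p$, valid on the (Lipschitz) domain $\Omega$, gives $u_\sigma(f)\in C^{1,\alpha}(\overline{\Omega})$ and
\[
\|u_\sigma(f)\|_{C^{1,\alpha}(\overline{\Omega})}\le C\|u_\sigma(f)\|_{W^{2,p}(\Omega)}\le C\|f\|_{L^\infty(\Omega)}.
\]
Choosing, say, $p=2n$ makes $\alpha=1/2$ depend only on $n$, as required, and combining this with the $H^2$ bound obtained from $p=2$ yields \eqref{t1.1}.

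The main obstacle is not any single estimate but the \emph{uniformity} of the constants over the whole class $\Sigma_0$: the $W^{2,p}$ estimate constant in \cite[Theorem 9.15]{GT} depends on the coefficients only through the ellipticity constant, the modulus of continuity of the leading coefficients, and the $L^\infty$ bound of the lower-order ones, and all three are controlled solely by $\kappa$ uniformly in $\sigma\in\Sigma_0$; this is exactly what the definition of $\Sigma_0$ guarantees. The other delicate point, already isolated above, is that the passage between the divergence and non-divergence formulations---both to rewrite $L_\sigma$ as $\widetilde{L}_\sigma$ on $H^2$ functions and to integrate by parts in the identification step---relies crucially on the Lipschitz regularity $\sigma\in C^{0,1}(\overline{\Omega})$ built into $\Sigma_0$.
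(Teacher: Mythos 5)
Your proof is correct, but it is organized quite differently from the paper's. The paper first invokes \cite[Theorem 8.53]{RR} to obtain $u_\sigma(f)\in H^2(\Omega)$ with $\|u_\sigma(f)\|_{H^2(\Omega)}\le C\|f\|_{L^2(\Omega)}$, and then runs a dimension-dependent bootstrap: the first-order term $(\partial_j\sigma^{ij})\partial_iu$ is in effect treated as part of the source, so the admissible integrability of the right-hand side is tied to that of $\nabla u$, and the proof climbs the chain $p_k=2n/(n-2k)$ via Sobolev embeddings and the $W^{2,p}$ theory of \cite[Theorems 9.14 and 9.15]{GT}, with separate cases for $n=2$, $n=3$, $n$ odd and $n$ even. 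You instead keep the drift inside the operator $\widetilde{L}_\sigma$, which is legitimate because \cite[Theorem 9.15]{GT} only requires bounded measurable first-order coefficients (and here $c=0$, so the sign condition holds); this lets you solve in $W^{2,p}\cap W_0^{1,p}$ for every $p$ simultaneously, and reduces the whole argument to one identification step (integration by parts plus Lax--Milgram uniqueness, which you justify correctly using $\sigma\in W^{1,\infty}$) and one Sobolev embedding. Your route is shorter, uniform in the dimension, yields the explicit exponent $\alpha=1/2$, and bypasses \cite[Theorem 8.53]{RR} entirely, since the $H^2$ statement falls out of the $p=2$ case; what the paper's bootstrap buys is that regularity is established directly on the given variational solution, with no solve-then-identify step — though an equivalent uniqueness argument is in any case buried in the proofs of the cited results of \cite{GT}. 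One citation detail on your side: the bound $\|w_p\|_{W^{2,p}(\Omega)}\le C\|f\|_{L^p(\Omega)}$ with no $\|w_p\|_{L^p(\Omega)}$ term on the right is \cite[Lemma 9.17]{GT} rather than Theorem 9.15 itself (Theorem 9.14 gives the estimate with $\|w_p\|_{L^p(\Omega)}$ included, which one can then remove, e.g., by the maximum principle); this is a labeling issue only and does not affect the validity of your argument or the uniformity of your constants over $\Sigma_0$.
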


We give the proof Theorem \ref{theorem1.1} in Appendix \ref{appendixA}.

\begin{proof}[Proof of Theorem \ref{theorem2}]
(1) is contained in \cite[Proposition 24 in page 625 and Proposition 26 in page 629]{DL}. 

(2) Fix $\omega \Subset \omega_0\Subset \Omega$ and $\phi \in C_0^\infty (\mathbb{R}^n\setminus \omega)$ satisfying $\phi=1$ in a neighborhood of $\overline{\Omega}\setminus\omega_0$. Then it is not difficult to check that $\phi G_\sigma(x,\cdot )$, $x\in \overline{\omega}$, is the solution of the BVP \eqref{bvp1} with
\[
f=2\sigma \nabla \phi\cdot \nabla G_\sigma (x,\cdot )+\sigma G_\sigma(x,\cdot )\Delta \phi.
\]
In light of (1) $f\in C(\Omega)$ and according to \cite[Inequalities (i) and (iv) of Theorem 3.3 in page 305]{GW} we have $f\in L^\infty (\Omega)$. The expected result follows by applying Theorem \ref{theorem1.1}. Furthermore from \cite[Inequality (1.8) of Theorem 1.1 in page 305]{GW} and \eqref{t1.1} we get
\[
\|G_\sigma(x,\cdot ) \|_{C^{1,\alpha}(\overline{\Omega}\setminus \omega_0)}\le c,\quad x\in \overline{\omega}.
\]
Here and henceforward $c=c(n,\Omega ,\omega ,\omega_0,\kappa)$.

Similarly, simple calculations show that $\phi [G_\sigma(x_1\cdot )-G_\sigma(x_2,\cdot )]$, $x_1,x_2\in \overline{\omega}$, is the solution of the BVP \eqref{bvp1} when
\[
f=2\sigma \nabla \phi\cdot \nabla [G_\sigma(x_1,\cdot )-G_\sigma(x_2,\cdot )]+\sigma [G_\sigma(x_1,\cdot )-G_\sigma(x_2,\cdot )])\Delta \phi.
\]
In light of \cite[Inequality (vi) of Theorem 3.3 in page 333]{GW} we get by applying the mean value theorem that
\[
\|f\|_{L^\infty(\Omega)}\le c|x_1-x_2|.
\] 
This inequality together with Theorem \ref{theorem1.1} yield
\[
\|G_\sigma(x_1,\cdot )-G_\sigma(x_2,\cdot ) \|_{C^{1,\alpha}(\overline{\Omega}\setminus \omega_0)}\le c|x_1-x_2|,\quad x_1,x_2\in \overline{\omega}.
\]
The proof is then complete.
\end{proof}

 We denote hereafter by $G$ the function $G_\sigma$ when $\sigma$ is identically equal to $1$. As usual the Poisson kernel is given by
 \[
 K(x,y)=-\partial_{\nu (y)}G(x,y),\quad x\in \Omega,\; y\in \Gamma .
 \]
 Note that according to Theorem \ref{theorem2} $K\in C(\Omega \times \Gamma)$.
 
 From \cite[Lemma 1 in page 21]{Zh} we have the following two-sided inequality
 \begin{equation}\label{Pk}
\varkappa^{-1}\frac{\mathrm{dist}(x,\Gamma)}{|x-y|^n}\le K(x,y) \le \varkappa \frac{\mathrm{dist}(x,\Gamma)}{|x-y|^n},\quad x\in \Omega,\; y\in \Gamma .
 \end{equation}
 where $\varkappa=\varkappa (n,\Omega)>1$ is a constant.
 
 Note that the domain in \cite[Lemma 1 in page 21]{Zh} is assumed to be of classe $C^2$ in order to guarantee the uniform interior sphere property but we know that this property is in fact satisfied by $C^{1,1}$-domains (see  Section \ref{section3}).
 
In the sequel we use the following notation, where $\sigma \in \Sigma_0$,
\[
K_\sigma(x,y) =-\partial_{\nu(y)}G_\sigma(x,y),\quad x\in \Omega,\; y\in \Gamma .
\]
Here again $K_\sigma \in C(\Omega \times \Gamma)$ by Theorem \ref{theorem2}.

\begin{proposition}\label{proposition1}
For all $\sigma \in \Sigma$ we have
\begin{equation}\label{ke}
\varkappa^{-1}\frac{\mathrm{dist}(x,\Gamma)}{|x-y|^n}\le K_\sigma (x,y) \le \varkappa \frac{\mathrm{dist}(x,\Gamma)}{|x-y|^n},\quad x\in \Omega,\; y\in \Gamma .
 \end{equation}
 where $\varkappa=\varkappa (n,\Omega, \kappa)>1$ is a constant.
\end{proposition}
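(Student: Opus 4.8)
The plan is to deduce the two-sided bound for $K_\sigma$ from the already established bound \eqref{Pk} for the Laplacian Poisson kernel $K$, by first comparing the Green functions $G_\sigma$ and $G$ and then passing to normal derivatives at the boundary. First I would invoke the quotient estimate for Green functions on $C^{1,1}$-domains from \cite[main Theorem in page 105]{HS}, applied to the pair of operators $L_\sigma$ and $-\Delta$ (the latter corresponding to $\sigma\equiv 1\in\Sigma$). This yields a constant $c>1$, depending only on $n$, $\Omega$ and $\kappa$ (with $\beta$ fixed) and uniform over $\sigma\in\Sigma$ thanks to the uniform ellipticity and $C^{1,\beta}$-bounds built into the definition of $\Sigma$, such that
\begin{equation}\label{ke1}
c^{-1}G(x,y)\le G_\sigma(x,y)\le c\,G(x,y),\quad (x,y)\in\mathscr{U}.
\end{equation}
It is precisely at this step that the $C^{1,\beta}$-regularity of the coefficients is needed. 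Note that \eqref{ke1} holds on all of $\mathscr{U}$, which will make the final constant independent of the boundary point.

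Next I would transfer \eqref{ke1} to the Poisson kernels. Fix $x\in\Omega$ and $y\in\Gamma$, and set $y_t=y-t\nu(y)$ for small $t>0$, so that $y_t\in\Omega$ and $\mathrm{dist}(y_t,\Gamma)=t$ for $t$ small enough. Since $G(x,\cdot)$ and $G_\sigma(x,\cdot)$ both vanish on $\Gamma$ and belong to $C^1$ up to $\Gamma$ away from the pole by Theorem \ref{theorem2}, a first order Taylor expansion gives $G(x,y_t)=t\,K(x,y)+o(t)$ and $G_\sigma(x,y_t)=t\,K_\sigma(x,y)+o(t)$ as $t\to0^+$. Evaluating \eqref{ke1} at $y=y_t$, dividing by $t$ and letting $t\to0^+$ then produces
\[
c^{-1}K(x,y)\le K_\sigma(x,y)\le c\,K(x,y),\quad x\in\Omega,\;y\in\Gamma.
\]
Finally, inserting this comparison into the two-sided estimate \eqref{Pk} for $K$ and relabelling the constant yields the announced inequality \eqref{ke}.

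The main obstacle is exactly the passage from the Green function comparison \eqref{ke1} to the Poisson kernel comparison: boundedness of the quotient $G_\sigma/G$ does not by itself transfer to the quotient of normal derivatives. What makes the limiting argument legitimate are two facts. First, both Green functions vanish on $\Gamma$ and are $C^1$ up to the boundary away from the pole, as guaranteed by Theorem \ref{theorem2}, so that the Taylor expansions above are valid. Second, the reference normal derivative is nonzero, namely $K(x,y)=-\partial_{\nu(y)}G(x,y)>0$ by Hopf's lemma applied to the positive function $G(x,\cdot)$. Together these ensure that $G_\sigma(x,y_t)/G(x,y_t)$ converges to $K_\sigma(x,y)/K(x,y)$, which is then squeezed in $[c^{-1},c]$ by \eqref{ke1}. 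A secondary point to verify is that the constant $c$ coming from \cite{HS} is genuinely uniform over the whole class $\Sigma$; this follows because that constant depends only on the ellipticity and regularity parameters that are fixed in the definition of $\Sigma$.
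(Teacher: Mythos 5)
Your proposal is correct and follows essentially the same route as the paper: both invoke the Hueber--Sieveking two-sided comparison of $G_\sigma$ with $G$, transfer it to the Poisson kernels by taking difference quotients along the inward normal (using that both Green functions vanish on $\Gamma$ and are $C^1$ up to the boundary away from the pole, by Theorem \ref{theorem2}), and then conclude via Zhao's estimate \eqref{Pk}. Your first-order Taylor expansion at $y_t=y-t\nu(y)$ is exactly the paper's squeeze of difference quotients, so there is nothing genuinely different to report.
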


\begin{proof}
In this proof $\varkappa=\varkappa (n,\Omega, \kappa)>1$ is a generic constant. Let $\sigma \in \Sigma$. Then  the following two sided inequality is contained in \cite[main Theorem in page 105]{HS}
\begin{equation}\label{ke1}
-\varkappa^{-1}G(x,y)\le -G_\sigma (x,y)\le -\varkappa G(x,y),\quad x,y\in \Omega.
\end{equation}

Fix $x\in \Omega$ and $y_0\in \Gamma$. Then for sufficiently small $t$ we have from \eqref{ke1}
\begin{align*}
\varkappa^{-1}\frac{-G(x,y_0-t\nu (y_0))+G(x,y_0)}{t}&\le \frac{-G_\sigma (x,y_0-t\nu (y_0))+G_\sigma (x,y_0)}{t}
\\
&\qquad \le \varkappa\frac{-G(x,y_0-t\nu (y_0))+G(x,y_0)}{t},\quad t>0,
\end{align*}
where we used that $G(x,y_0)=G_\sigma (x,y_0)=0$. Passing to the limit when $t$ goes to zero (observe that $G(x,\cdot)$ and $G_\sigma(x,\cdot)$ are $C^1$ up to the boundary)  we find
\[
-\varkappa^{-1}\partial_{\nu(x)}G(x,y_0)\le -\partial_{\nu(x)}G_\sigma(x,y_0)\le - \varkappa\partial_{\nu(x)}G(x,y_0)
\]
or equivalently
\[
\varkappa^{-1}K(x,y_0)\le K_\sigma (x,y_0)\le \varkappa K(x,y_0).
\]
We obtain the expected inequality by using \eqref{Pk}.
\end{proof}

\section {Proof of Theorem \ref{theorem1}}\label{section3}

In the sequel we use the fact that $\Omega$, which is $C^{1,1}$, admits the uniform interior sphere condition (e.g \cite[Theorem 1.0.9, page 7]{Ba}). This means that there exists $r>0$ so that for any $x\in \Gamma$, there exists $\tilde{x}\in \Omega$ with the property that $B(\tilde{x},r)\subset \Omega$ and $\partial B(\tilde{x},r)\cap \Gamma =\{x\}$.

Let $u\in \mathscr{S}$ so that $\nabla u\ne 0$ and pick $x_0\in M(u)$. Note that if $u$ is constant then \eqref{me} holds obviously.

Take $\tilde{x}_0\in \Omega$ with the property that $B(\tilde{x}_0,r)\subset \Omega$ and $\partial B(\tilde{x}_0,r)\cap \Gamma =\{x_0\}$. As in the proof of Hopf's Lemma, we introduce the function, where $0<\rho<r$ is arbitrary fixed,
\[
v(x)=e^{-\lambda |x-\tilde{x}_0|^2}-e^{-\lambda r^2}\quad \mbox{in}\; \omega=\{x\in \mathbb{R}^n;\; \rho<|x-\tilde{x}_0|<r\},
\]
where the constant $\lambda >0$ is to determined hereafter.

Straightforward computations show 
\[
L_\sigma v\ge (4\lambda ^2\rho ^2\kappa ^{-1}-c\lambda )e^{-\lambda r^2}\quad \mbox{in}\; \omega ,
\]
where $c=c(\kappa,r)$ is a constant. We fix then $\lambda=\lambda(\kappa,\rho,r)$ sufficiently large in such a way that $L_\sigma v\ge 0$.

In light of the strong maximum principle $\max_{|x-\tilde{x}_0|=\rho}u<u(x_0)$. Let then
\[
\epsilon =\frac{u(x_0)-\max_{|x-\tilde{x}_0|=\rho}u}{e^{-\lambda \rho^2}-e^{-\lambda r^2}}
\]
and
\[
w(x)=u(x)-u(x_0)+\epsilon v(x)\quad x\in \omega .
\]
Our choice of $\epsilon$ guarantees that $w\le 0$ on $\partial \omega$. As $L_\sigma w=\epsilon L_\sigma v\ge 0$ in $\Omega$ we derive from the weak maximum principle that $w\le 0$ in $\overline{\omega}$ (e.g. \cite[Theorem 3.1, page 32]{GT}). But $w(x_0)=0$ which means that $w$ achieves its maximum at $x_0$. In consequence $\partial_\nu w(x_0)\ge 0$ and hence
\[
\partial_\nu u(x_0)\ge -\epsilon \partial_\nu v(x_0)=2r\epsilon \lambda e^{-\lambda r^2}.
\]
In particular we have
\begin{equation}\label{1}
\partial_\nu u(x_0)\ge \gamma \min_{|x-\tilde{x}_0|=\rho}(u(x_0)-u(x)), 
\end{equation}
where 
\[
\gamma =\frac{2r \lambda e^{-\lambda r^2}}{e^{-\lambda \rho^2}-e^{-\lambda r^2}}.
\]

On the other hand, according to \cite[formula (8.95) in page 628]{DL} we know that
\begin{equation}\label{3}
u(x_0)-u(x)=\int_\Gamma K_\sigma (x,y)(u(x_0)-u(y))dS(y),\quad x\in \Omega ,
\end{equation}
which in light of the lower bound in \eqref{ke} yields
\begin{equation}\label{4}
\min_{|x-\tilde{x}_0|=\rho}(u(x_0)-u(x))\ge C\int_\Gamma (u(x_0)-u(y))dS(y).
\end{equation}
Here and until the rest of this proof $C=C(n,\Omega ,\kappa)>0$ is a generic constant.

We obtain by putting together inequalities \eqref{1} and \eqref{4} 
\begin{equation}\label{5}
\int_\Gamma (u(x_0)-u(y))dS(y)\le C\partial_\nu u(x_0).
\end{equation}

Let $\mathbf{d}$ be the diameter of $\Omega$. Then
\[
\int_\Omega \frac{\mathrm{dist}(x,\Gamma)}{|x-y|^{n}}dx\le\int_\Omega \frac{dx}{|x-y|^{n-1}}\le  |\mathbb{S}^{n-1}|\mathbf{d}\quad \mbox{for all}\; y\in \Gamma .
\]
This and the upper bound in \eqref{ke} show that $K_\sigma \in L^\infty (\Gamma ,L^1(\Omega))$ with 
\begin{equation}\label{6}
\|K_\sigma\|_{L^\infty (\Gamma ,L^1(\Omega))}\le C.
\end{equation}

Using again \eqref{3} we get by applying Fubini's theorem
\[
\int_\Omega (u(x_0)-u(x))dx =\int_\Gamma (u(x_0)-u(y))dS(y) \int_\Omega K_\sigma (x,y)dx.
\]
Therefore \eqref{6} gives 
\[
\int_\Omega (u(x_0)-u(x))dx \le C \int_\Gamma (u(x_0)-u(y))dS(y),
\]
which combined with \eqref{5} implies
\[
\|u(x_0)-u\|_{L^1(\Omega )}\le C\partial_\nu u(x_0).
\]
Hence
\[
\|u(x_0)-u\|_{L^1(\Omega )}+\|u(x_0)-u\|_{L^1(\Gamma )}\le C\partial_\nu u(x_0),
\]
as expected.

\appendix

\section{Proof of Theorem \ref{theorem1.1}}\label{appendixA}

In all this proof $C=C(n,\Omega ,\kappa)>0$ is a generic constant.

We denote Poincar\'e's constant of $\Omega$  by $\mathfrak{p}^2$:
\[
\|w\|_{L^2(\Omega )}\le \mathfrak{p} \|\nabla w\|_{L^2(\Omega)}\quad \mbox{for each}\; w\in H_0^1(\Omega ).
\]

Let $\sigma\in \Sigma_0$, $f\in L^\infty (\Omega)$ and $u=u_\sigma (f)$. We get in a straightforward manner by taking $v=u$ in \eqref{1.0}
\[
\|\nabla u\|_{L^2(\Omega)}^2\le \kappa \|f\|_{L^2(\Omega)}\|u\|_{L^2(\Omega)}.
\]
This inequality together with Poincar\'e's inequality give
\begin{equation}\label{1.1}
\|\nabla u\|_{L^2(\Omega)}\le \mathfrak{p}\kappa \|f\|_{L^2(\Omega)}.
\end{equation}

We can then apply \cite[Theorem 8.53 in page 326]{RR} and its proof in order to conclude that $u\in H^2(\Omega )$ and
\begin{equation}\label{1.2}
\|u\|_{H^2(\Omega )}\le C\|f\|_{L^2(\Omega)}.
\end{equation}

We now discuss $C^{1,\alpha}$ regularity of $u$. To this purpose we shall use repeatedly \cite[Theorem 9.14 in page 240 and Theorem 9.15 in page 241]{GT} concerning $W^{2,p}$ elliptic regularity and the corresponding $W^{2,p}$ a priori estimate.

Let us then consider first the case $n=2$. In that case since $H^1(\Omega )$ is continuously embedded in 
$C(\overline{\Omega})$ we derive that 
\[
L_\sigma u=f\in L^p(\Omega ),\quad 1<p<\infty .
\]
Whence $u\in W^{2,p}(\Omega)$ and 
\[
\|u\|_{W^{2,p}(\Omega )}\le C\|f\|_{L^p(\Omega)}.
\]
This and \eqref{1.2} imply
\begin{equation}\label{1.4}
\|u\|_{W^{2,p}(\Omega )}\le C\|f\|_{L^p(\Omega )}.
\end{equation}

For $0<s <1$, we can apply the preceding result with $p_s=2/(1-s)$. Noting that $W^{2,p_s}(\Omega)$ is continuously embedded in $C^{1,s}(\overline{\Omega})$ we deduce that $u\in C^{1,s}(\overline{\Omega})$ and from \eqref{1.4} we have
\begin{equation}\label{1.5}
\|u\|_{C^{1,s}(\overline{\Omega})}\le C\|f\|_{L^\infty (\Omega )}.
\end{equation}

Similarly when $n=3$, using that $H^1(\Omega )$ is continuously embedded in $L^6(\Omega )$, we obtain that $u\in W^{2,6}(\Omega )$. But $W^{2,6}(\Omega )$ is continuously embedded in $C^{1,1/2}(\overline{\Omega} )$. Therefore $u\in C^{1,1/2}(\overline{\Omega} )$ and the following estimate holds
\begin{equation}\label{1.6}
\|u\|_{C^{1,1/2}(\overline{\Omega})}\le C\|f\|_{L^\infty(\Omega )}.
\end{equation}

Assume that $n\ge 4$ and let $k_n\ge 1$ be the smallest integer $k$ so that $k\ge n/2-1$. Define 
\[
p_1=\frac{2n}{n-2}\quad p_k=\frac{np_{k-1}}{n-p_{k-1}},\quad 2\le k\le k_n.
\]
It is then not hard to check that
\[
p_k=\frac{2n}{n-2k}\quad 0\le k\le k_n.
\]
 
We proceed as before. First we use that $H^1(\Omega)$ is continuously imbedded in $L^{p_1}(\Omega)$ to derive that $u\in W^{2,p_1}(\Omega)$ and
\[
\|u\|_{W^{2,p_1}(\Omega )}\le C\|f\|_{L^{p_1}(\Omega )}.
\]
We use then that $W^{1,p_1}(\Omega)$ is continuously embedded in $L^{p_2}(\Omega)$ and we repeat the preceding argument in order to obtain that $u\in W^{2,p_2}(\Omega)$ and
\[
\|u\|_{W^{2,p_2}(\Omega )}\le C\|f\|_{L^{p_2}(\Omega )}.
\]
By induction in $k$ we get at the end that $u\in W^{2,p_{k_n}}(\Omega )$ and 
\[
\|u\|_{W^{2,p_{k_n}}(\Omega )}\le C\|f\|_{L^{p_{k_n}}(\Omega )}.
\]

When $n=2m+1$, $m\ge 1$ then $k_n=m$. In that case as $W^{2,p_{k_n}}(\Omega )$ is continuously embedded in $C^{1,1/2}(\overline{\Omega})$ we obtain that $u\in C^{1,1/2}(\overline{\Omega})$ and
\begin{equation}\label{1.7}
\|u\|_{C^{1,1/2}(\overline{\Omega})}\le C\|f\|_{L^\infty(\Omega )}.
\end{equation}

If $n=2m$, $m\ge 2$, we have $p_{k_n}=n$. In particular $u\in W^{2,n-1/2}(\Omega )$ and
\[
\|u\|_{W^{2,n-1/2}(\Omega )}\le C\|f\|_{L^{n-1/2}(\Omega )}.
\]
Using that $W^{2,n-1/2}(\Omega )$ is continuously embedded in $L^{n(2n-1)}(\Omega)$ we get that $u\in W^{2,n(2n-1)}(\Omega )$ and
\[
\|u\|_{W^{2,n(2n-1)}(\Omega )}\le C\|f\|_{L^{n-1/2}(\Omega )}.
\]
We finally obtain $u\in C^{1,\alpha}(\overline{\Omega})$, with $\alpha =(2n-2)/(2n-1)$, and
\begin{equation}\label{1.8}
\|u\|_{C^{1,\alpha}(\overline{\Omega})}\le C\|f\|_{L^\infty(\Omega )}.
\end{equation}

\section{Numerical testing}\label{appendixB}

We limit our numerical testing to (sufficiently smooth) isotropic $\sigma$ with $n=2$, $\Omega = B(0,1)$ and the following sequence of boundary conditions:
\[
h_k (x_1,x_2) = \left(\frac{x_2+3}{4}\right)^{1/k}, \quad n\in \mathbb{N},\; \mbox{and}\; |(x_1,x_2)|=1.
\]
It is not hard to check that $(h_k)$ converges uniformly on $\mathbb{S}^1$ to the constant function equal to $1$.

Denote by $u_k$ the solution of the BVP
\[
\left\{
\begin{array}{l}
-\mbox{div}(\sigma \nabla u_k)=0\quad \mbox{in}\; \Omega,
\\
\gamma_0u_k=h_k.
\end{array}
\right.
\]
According to the maximum principle $u_k$  attains its maximum at $x_0=(0,1)$.

\begin{figure}[ptb]
\centering
\begin{subfigure}[p]{.24\textwidth}
    \centering
   	\includegraphics[scale=.23,trim={0cm 0cm 1cm 0cm},clip]{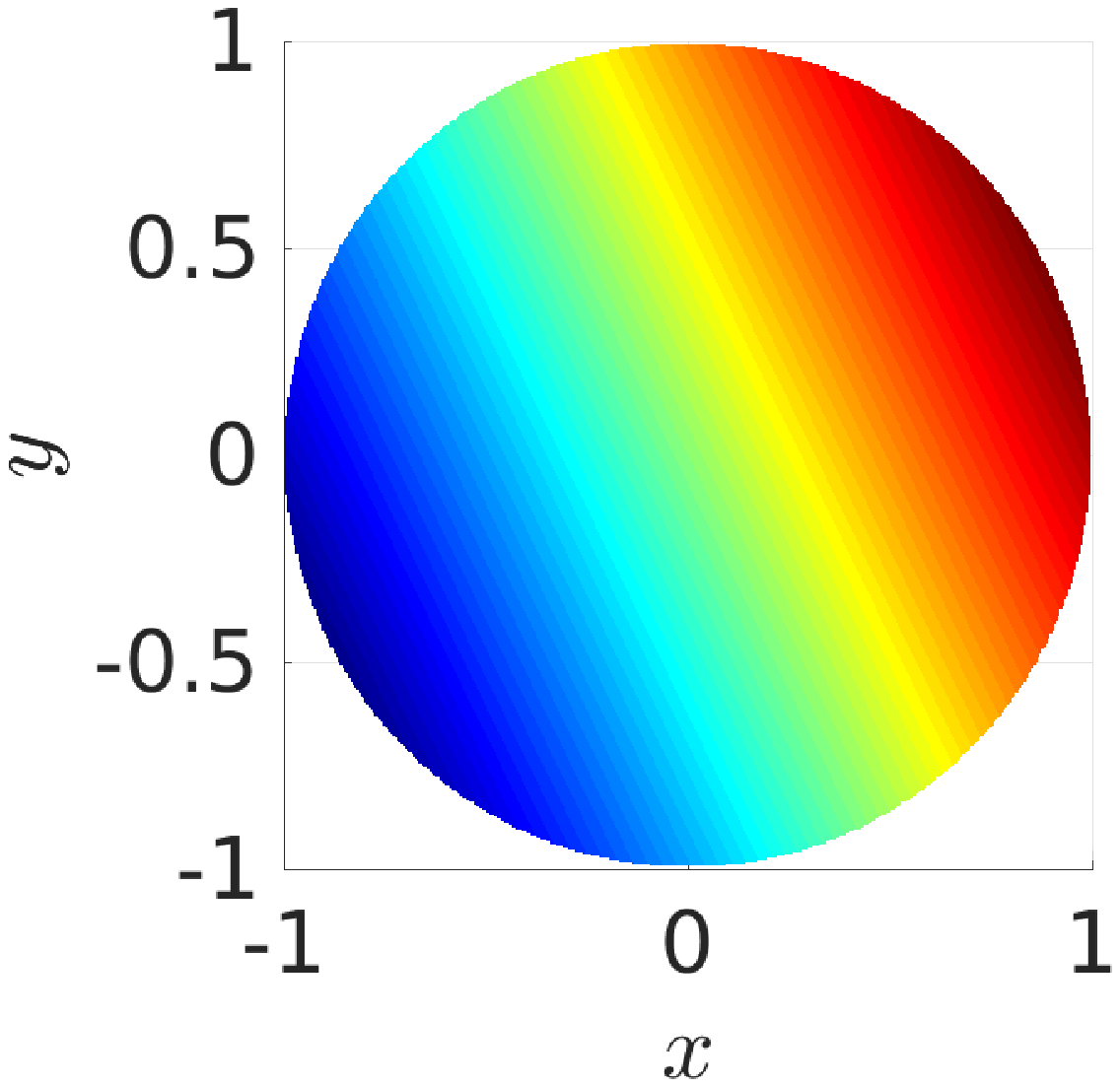}
   	\caption{Linear}
   	\label{fig:linear}
\end{subfigure}
\begin{subfigure}[p]{.24\textwidth}
    \centering
    \includegraphics[scale=.23,trim={0cm 0cm 1cm 0cm},clip]{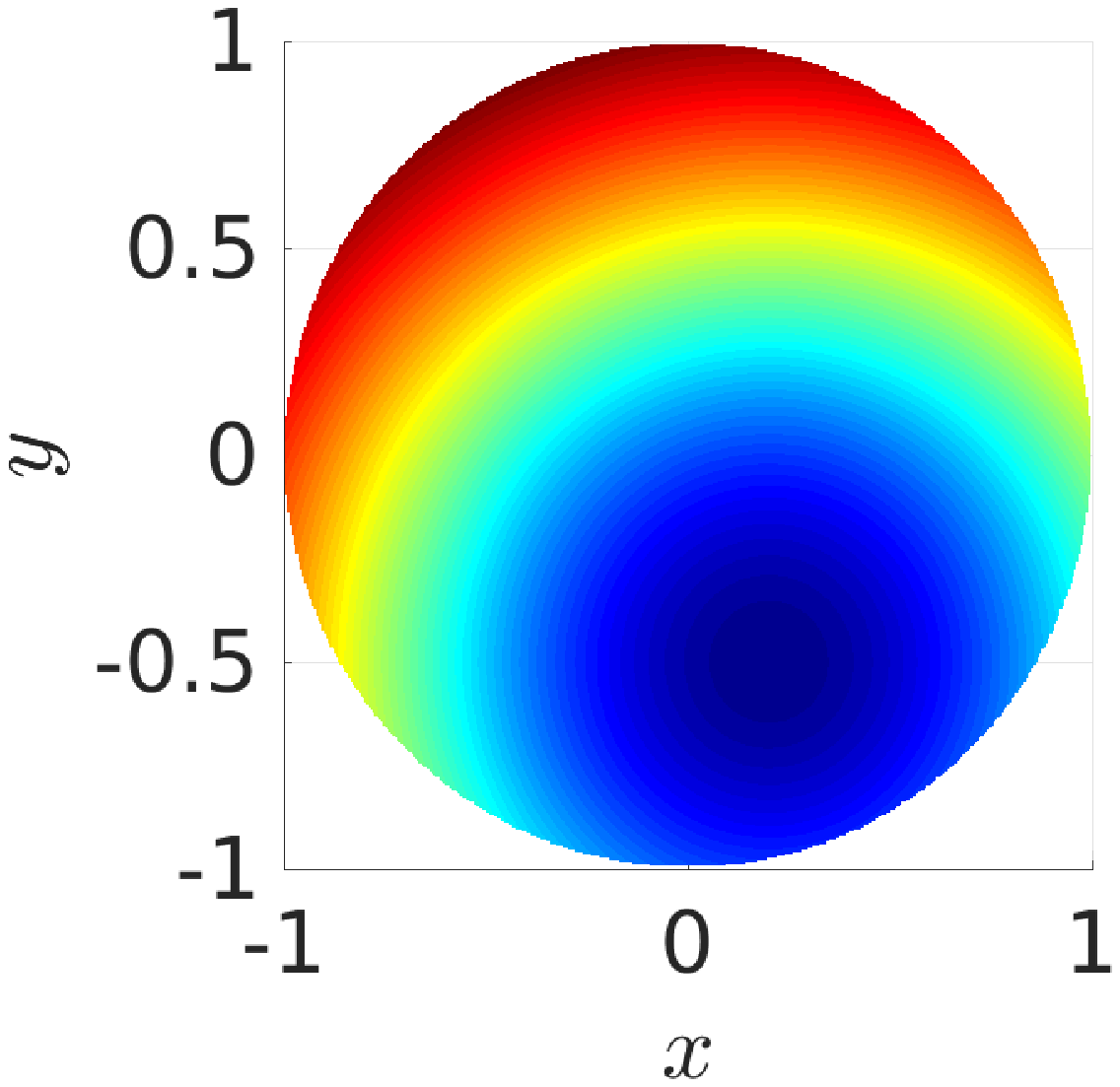}
    \caption{Gaussian}
   	\label{fig:Gaussian}
\end{subfigure}
\begin{subfigure}[p]{.24\textwidth}
    \centering
    \includegraphics[scale=.23,trim={0cm 0cm 1cm 0cm},clip]{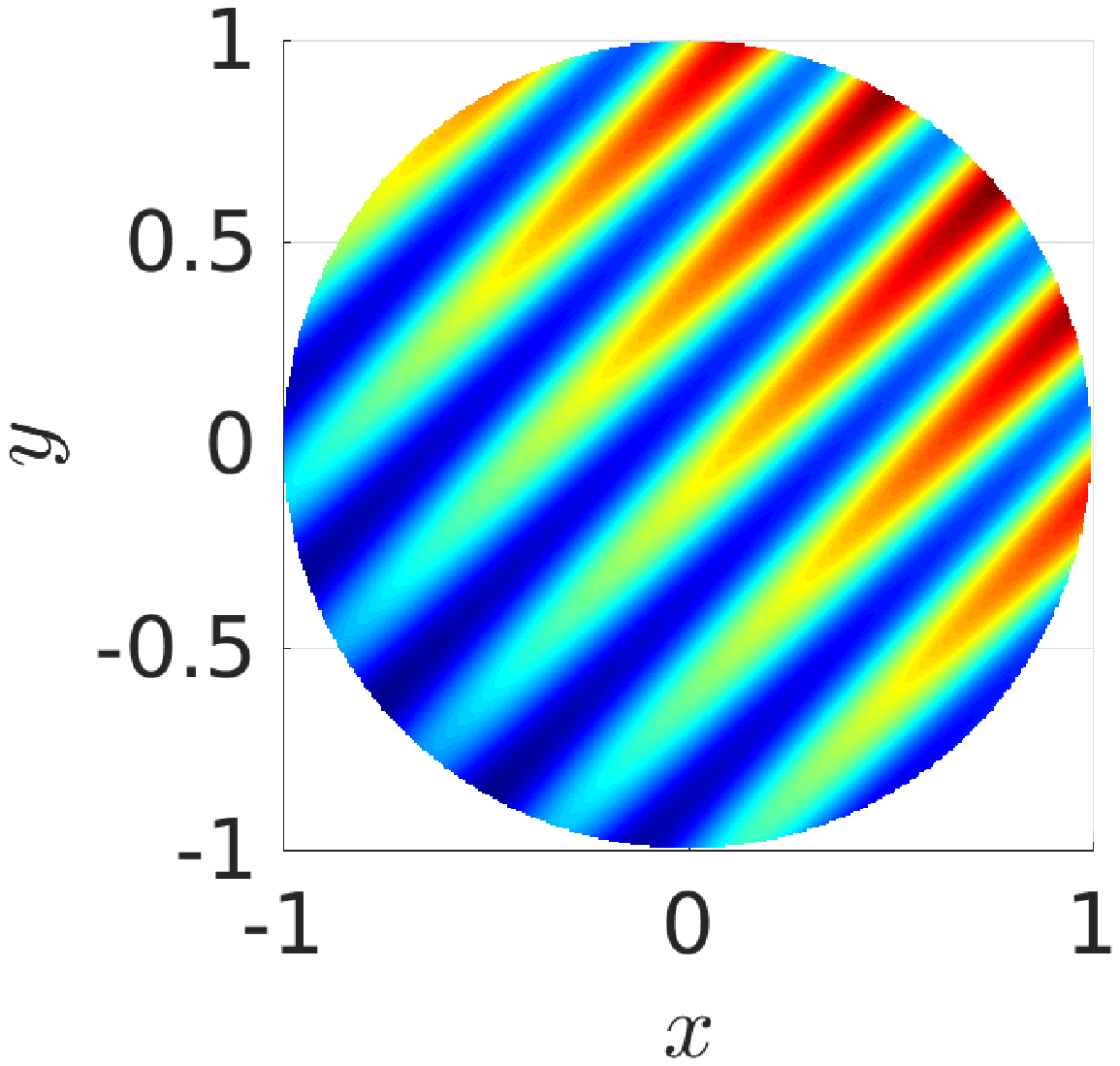}
	\caption{Oscillating}
   	\label{fig:oscillating}
\end{subfigure}
\begin{subfigure}[p]{.24\textwidth}
    \centering
    \includegraphics[scale=.23,trim={0cm 0cm 0cm 0cm},clip]{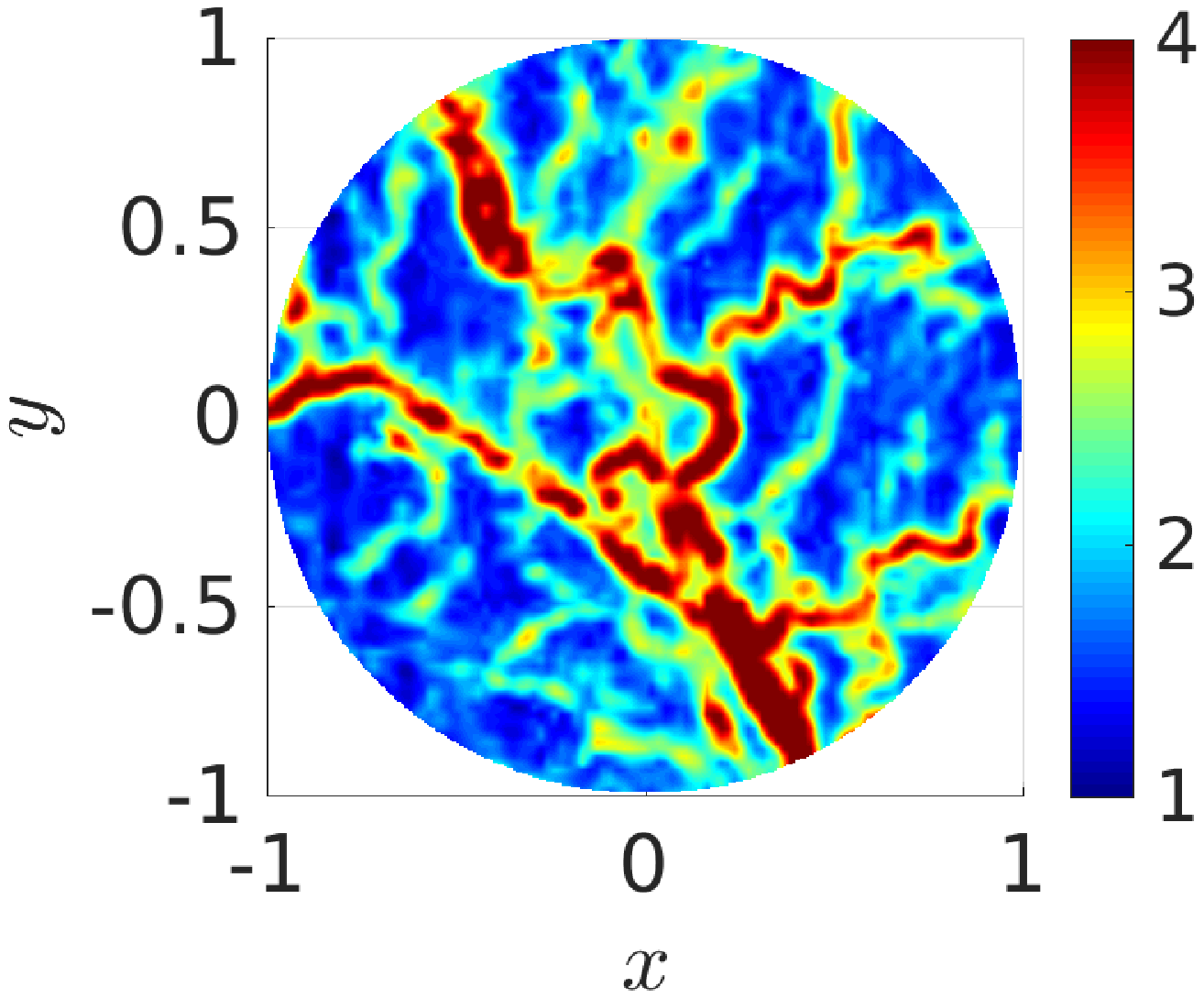}
    \caption{Realistic}
   	\label{fig:real}
\end{subfigure}
\caption{Different choices of the coefficient $\sigma$.}
\label{fig:sigma}
\end{figure}

We considered four different choices of the coefficient $\sigma$ in Fig.\ref{fig:sigma}: linear, Gaussian, oscillating and realistic.
We restrict the highest and lowest values to $4$ and $1$ respectively. 

\begin{figure}[ptb]
\centering
\includegraphics[scale=.6,trim={0cm 0cm 0cm 0cm},clip]{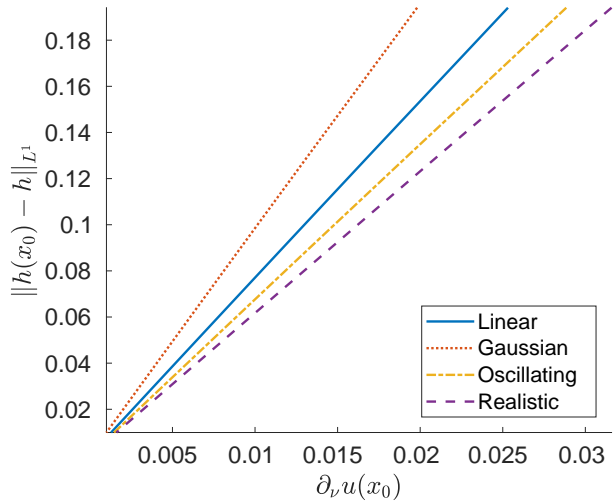}
\caption{Convergence of $\|h(x_0)-h\|_{L^1(\Gamma)}$ with respect to the normal derivative $\partial_\nu u(x_0)$.}
\label{fig:convergence}
\end{figure}

We observe in Fig.\ref{fig:convergence} that $\|h(x_0)-h\|_{L^1(\Gamma)}$ converges to $0$ linearly as $\partial_\nu u(x_0)\rightarrow 0$.

\end{document}